\documentclass[11pt]{article}

\usepackage{amsmath,amssymb,amsthm,mathtools}
\usepackage{enumitem}
\usepackage[expansion=false]{microtype}
\usepackage[authoryear,round]{natbib}
\usepackage{hyperref}
\usepackage{cleveref}
\usepackage{complexity}

\newtheorem{theorem}{Theorem}[section]
\newtheorem{proposition}[theorem]{Proposition}

\newtheorem{corollary}[theorem]{Corollary}

\theoremstyle{definition}
\newtheorem{definition}[theorem]{Definition}
\newtheorem{example}[theorem]{Example}

\theoremstyle{remark}

\newcommand{\Rplus}{\mathbb{R}_{+}}

\newcommand{\vx}{\mathbf{x}}
\newcommand{\vs}{\mathbf{s}}
\newcommand{\simplex}{\mathcal{B}}
\newcommand{\supp}{\operatorname{supp}}
\newcommand{\eff}{r}
\newcommand{\elas}{E}
\newcommand{\argmax}{\operatorname*{arg\,max}}

\newenvironment{keywords}
  {\par\noindent\textbf{Keywords: }}
  {\par}

\title{Optimal Practice Allocation\\
Under Learning Saturation}

\author{Shrisha Rao}

\date{}

\begin{document}

\maketitle

\begin{abstract}
A saying attributed to Bruce Lee unfavorably contrasts a martial
artist who has practiced ten thousand kicks once each with another who
has practiced a single kick ten thousand times.  We read the saying as
implicitly raising a question about how to allocate a fixed practice
budget among several skills, and we show that the answer is governed
by two ingredients: the shape of the learning curve that converts
practice into skill, and the rule by which separate skills are
aggregated into overall effectiveness. Neither ingredient alone
settles the matter.  Our central result reduces the multivariable
allocation problem to the maximization of a single scalar
\emph{efficiency function} \(\eff(x)=f(x)^{p}/x\); under a simple
uniqueness and divisibility condition, every optimal practice schedule
is \emph{balanced}, dividing the budget equally among a definite
number of skills. The optimal number of skills is then determined by a
transparent criterion equating the elasticity of the learning curve to
the reciprocal of the aggregation parameter. Hard-saturation models,
heterogeneous learning rates, a sharp specialization threshold, and a
genuinely intermediate optimal repertoire all follow as consequences.
\end{abstract}

\begin{keywords}
resource allocation, separable optimization, practice allocation,
learning curves, specialization, diminishing returns, elasticity,
majorization
\end{keywords}

\noindent\textbf{2020 Mathematics Subject Classification}. Primary 90C30;
Secondary 90C25, 90C26, 90C27.

\section{Introduction: One Kick or Ten Thousand?}
\label{sec:intro}

As a saying widely attributed to Bruce Lee has it, ``I fear not the man who
has practiced ten thousand kicks once. But I fear the man who has practiced
one kick ten thousand times'' \citep[p.~109]{lee2020bewater}.

Whatever be its sensibility as advice about the martial arts, the
aphorism poses a natural mathematical question.  Consider a learner
who has a finite amount of practice time and several skills on which
to spend it.  Repeated practice improves a skill, but perhaps with
diminishing returns and perhaps only up to a point.  Should the
learner concentrate all available effort on a single skill, spread it
evenly over many skills, or choose some intermediate repertoire?  The
question resonates well beyond martial arts, from the psychology of
deliberate practice \citep{ericsson1993deliberate} to its popular
restatement as a rule of ten thousand hours
\citep{gladwell2008outliers}; our aim is to isolate the mathematics
beneath it.

The question is a resource-allocation problem, but one with two
distinct ingredients.  The first is a collection of \emph{learning
curves}, which describe how practice produces skill; the long-studied
empirical tendency for gains from practice to diminish, often modeled
by concave, power-law curves, records this \citep{newell1981practice}.
The second is an \emph{aggregation rule}, which describes how several
acquired skills combine into overall effectiveness.  Our main theme is
that neither ingredient alone determines whether specialization is
desirable, and that the interaction between them is captured by a
single scalar function.

We begin with the simplest possible framework.  There are \(n\) skills,
indexed by \(1,\ldots,n\), and a total practice budget \(B>0\); throughout
we write \(\Rplus=[0,\infty)\), so that a coordinate may equal zero. An
allocation is a vector
\[
    \vx=(x_1,\ldots,x_n)\in\Rplus^{\,n},
    \qquad
    \sum_{i=1}^n x_i=B,
\]
where \(x_i\) is the practice devoted to skill \(i\). Skill \(i\) has a
learning curve
\[
    f_i:\Rplus\longrightarrow\Rplus,
    \qquad
    f_i(0)=0,
\]
so that the attained skill level is \(f_i(x_i)\); the normalization
\(f_i(0)=0\) records that skill is built only through practice. The
feasible set is the scaled simplex
\[
    \simplex_B
    =
    \Bigl\{
        \vx\in\Rplus^{\,n}:
        \textstyle\sum_{i=1}^n x_i=B
    \Bigr\}.
\]
Given an increasing aggregation function \(A:\Rplus^{\,n}\to\Rplus\), the
general problem is
\begin{equation}
    \max_{\vx\in\simplex_B}
    A\bigl(f_1(x_1),\ldots,f_n(x_n)\bigr).
    \label{eq:general-problem}
\end{equation}

The problem~\eqref{eq:general-problem} is an applied mathematical
model of resource allocation: a fixed budget is distributed among
competing activities whose returns are governed by learning curves and
whose contributions are combined through an aggregation rule. The
Bruce Lee example is a concrete motivating instance, but the results
below concern this broader class of allocation models. The subsequent
analysis focuses on the separable case, with a fixed budget divided
among activities and, in the principal model, a tunable power-mean
aggregation rule.
 
Our principal contribution is a reduction of the multivariable problem
\eqref{eq:general-problem}, under power-mean aggregation, to the
maximization of a single scalar \emph{efficiency function}
\(\eff(x)=f(x)^{p}/x\). \Cref{thm:efficiency-bound} bounds the value of
every feasible allocation by \(B\,\sup_{x}\eff(x)\); \Cref{thm:structural}
shows that, whenever the efficiency optimum divides the budget evenly,
every optimal schedule is \emph{balanced}, dividing the budget equally
among a definite number of skills; and the elasticity criterion of
\Cref{prop:elasticity}, namely \(\elas_{f}(x^{*})=1/p\), fixes that number
by equating the elasticity of the learning curve to the reciprocal of the
aggregation exponent. The hard-saturation model of \Cref{sec:saturation}
and the heterogeneous model of \Cref{sec:heterogeneous}, the latter
including a reduction to the \(0\)--\(1\) knapsack problem
(\Cref{prop:knapsack}), arise as consequences and variants of this
analysis.
 
The separable problem \eqref{eq:general-problem} sits within the
extensively studied class of \emph{resource-allocation problems}, in
which a fixed divisible budget is distributed among activities with
concave or convex returns; \citet{ibaraki1988resource} survey the
algorithmic theory, and \citet{patriksson2008survey} the continuous
nonlinear case. Neither the simplex constraint nor the separability of
the objective is new. What the present treatment adds is the
efficiency-function reduction and the elasticity criterion it yields,
which collapse the allocation question to a one-dimensional problem and
expose the aggregation exponent \(p\) as the parameter that governs the
transition between diversification and specialization. The resulting
comparison between the equalizing and concentrating optima may also be
read through majorization and Schur-convexity
\citep{marshall2011majorization}, which order symmetric separable
objectives by the evenness of the allocation; we return to this
connection in \Cref{sec:discussion} without developing it.
 
The remainder of the paper analyzes \eqref{eq:general-problem} under
elementary but progressively more revealing assumptions, beginning with
the crudest saturation model and ending with the efficiency-function
reduction and its consequences.
 
\section{Saturation and the Limits of Specialization}
\label{sec:saturation}

We first dispose of the crudest objection to a literal reading of the
Bruce Lee aphorism.  Suppose a skill improves linearly until mastery
and not at all thereafter.  For a mastery threshold \(\tau>0\), define
\begin{equation}
    f_\tau(x)
    =
    \min\Bigl\{\tfrac{x}{\tau},\,1\Bigr\},
    \label{eq:hard-saturation}
\end{equation}
and take all \(n\) skills identical with additive effectiveness,
\begin{equation}
    U(\vx)
    =
    \sum_{i=1}^n f_\tau(x_i).
    \label{eq:additive-hard}
\end{equation}

\begin{proposition}[No Practice Beyond Saturation]
\label{prop:no-overshoot}
Suppose \(n\geq2\) and let \(f_\tau\) be given by
\eqref{eq:hard-saturation}. If an allocation \(\vx\in\simplex_B\) satisfies
\(x_i>\tau\) and \(x_j<\tau\) for some \(i\neq j\), then \(\vx\) is not
optimal for \eqref{eq:additive-hard}.
\end{proposition}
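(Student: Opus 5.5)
The plan is an explicit exchange argument: move a small amount of practice from the over-saturated skill \(i\) to the under-saturated skill \(j\), and show that the objective strictly increases. Since the move keeps the allocation in \(\simplex_B\), this proves that \(\vx\) is not optimal. Concretely, set
\[
    \delta=\min\{x_i-\tau,\;\tau-x_j\}>0,
\]
which is positive by the two strict hypotheses \(x_i>\tau\) and \(x_j<\tau\). Define \(\vx'\) by \(x'_i=x_i-\delta\), \(x'_j=x_j+\delta\), and \(x'_k=x_k\) for \(k\neq i,j\). Then \(x'_i\ge\tau\ge0\) and \(x'_j\ge0\), and the coordinate sum is unchanged, so \(\vx'\in\simplex_B\).

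Next, I compare \(U(\vx')\) with \(U(\vx)\) term by term using \eqref{eq:hard-saturation}. For \(k\neq i,j\) the terms coincide. For skill \(i\), both \(x_i\) and \(x'_i=x_i-\delta\) are at least \(\tau\) by the choice of \(\delta\). Hence \(f_\tau(x'_i)=f_\tau(x_i)=1\), and no skill is lost. For skill \(j\), both \(x_j\) and \(x'_j=x_j+\delta\) are at most \(\tau\), so \(f_\tau\) is linear on that range. Therefore \(f_\tau(x'_j)-f_\tau(x_j)=\delta/\tau>0\). Summing gives
\[
    U(\vx')=U(\vx)+\delta/\tau>U(\vx),
\]
which contradicts optimality of \(\vx\).

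There is no real obstacle. The only step needing care is choosing \(\delta\) so that both affected coordinates stay on the correct side of \(\tau\): the donor must not drop below saturation, where it would start losing value, and the recipient must not overshoot, where it would stop gaining. Taking the minimum of the two gaps handles both at once. The hypothesis \(n\ge2\) is used only to guarantee that two distinct indices \(i\neq j\) exist, as the statement already presumes.
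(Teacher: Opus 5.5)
Your proof is correct and is the same exchange argument the paper uses: the paper transfers any \(0<\varepsilon\leq\min\{x_i-\tau,\,\tau-x_j\}\) from skill \(i\) to skill \(j\), keeping \(f_\tau(x_i)\) fixed and raising \(f_\tau(x_j)\) by \(\varepsilon/\tau\), and your choice of \(\delta\) as exactly that minimum is one instance of this. Your explicit feasibility check and the remark that the recipient stays in the linear range \([0,\tau]\) add a little detail the paper leaves implicit.
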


\begin{proof}
Choose \(0<\varepsilon\leq\min\{x_i-\tau,\,\tau-x_j\}\) and transfer
\(\varepsilon\) units of practice from skill \(i\) to skill \(j\). The value
\(f_\tau(x_i)\) is unchanged, since \(x_i-\varepsilon\geq\tau\), while
\(f_\tau(x_j)\) increases by \(\varepsilon/\tau\). Total effectiveness
strictly increases.
\end{proof}

\begin{corollary}
\label{cor:hard-value}
For the hard-saturation model,
\[
    \max_{\vx\in\simplex_B}
    \sum_{i=1}^n f_\tau(x_i)
    =
    \min\Bigl\{\tfrac{B}{\tau},\,n\Bigr\}.
\]
If \(B=k\tau\) for an integer \(k\leq n\), then there is an optimal
allocation in which exactly \(k\) skills are fully mastered.
\end{corollary}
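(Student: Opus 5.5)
The plan is to prove matching upper and lower bounds on the maximum, and then read off the second claim from the construction used for the lower bound. No compactness argument is needed, because we will exhibit an allocation that attains the upper bound.

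\emph{Upper bound.} For every \(x\geq 0\) we have both \(f_\tau(x)\leq x/\tau\) and \(f_\tau(x)\leq 1\). Summing over \(i\), every \(\vx\in\simplex_B\) therefore satisfies
\[
\sum_{i=1}^n f_\tau(x_i)\leq \sum_{i=1}^n \frac{x_i}{\tau}=\frac{B}{\tau}
\qquad\text{and}\qquad
\sum_{i=1}^n f_\tau(x_i)\leq n,
\]
so \(U(\vx)\leq\min\{B/\tau,n\}\). This step is routine.

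\emph{Attainment.} We construct an explicit allocation in two cases.
\begin{itemize}
\item If \(B\leq n\tau\), set \(k=\lfloor B/\tau\rfloor\leq n\). Give \(\tau\) units to each of skills \(1,\dots,k\), give the remainder \(B-k\tau\in[0,\tau)\) to skill \(k+1\) (this skill exists whenever the remainder is positive, since then \(k<B/\tau\leq n\)), and give zero to the rest. The value is \(k+(B-k\tau)/\tau=B/\tau\).
\item If \(B>n\tau\), give \(B/n>\tau\) to every skill. Each term equals \(1\), so the value is \(n\).
\end{itemize}
In both cases the value equals \(\min\{B/\tau,n\}\), so the maximum exists and has the claimed value. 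Alternatively, one could derive the structure of optimizers from \Cref{prop:no-overshoot}: an optimizer either has no coordinate above \(\tau\), or has all coordinates at least \(\tau\). The direct construction is simpler, however, so I will use it.

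\emph{The case \(B=k\tau\).} When \(B=k\tau\) with \(k\leq n\), the first case applies with remainder zero. The allocation gives exactly \(\tau\) to \(k\) skills and nothing to the others, so exactly \(k\) skills are mastered (\(f_\tau=1\)) and the value is \(k=B/\tau\), which is optimal.

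The only delicate point is bookkeeping in the first case: making sure the leftover skill \(k+1\) exists when the remainder is positive, and handling the boundary \(B=n\tau\), where both cases give the value \(n\). I do not expect any real obstacle.
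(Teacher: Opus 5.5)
Your proof is correct and follows the same route as the paper: the pointwise bounds \(f_\tau(x)\leq x/\tau\) and \(f_\tau(x)\leq 1\) give the upper bound, and an explicit allocation in each of the cases \(B\leq n\tau\) and \(B>n\tau\) attains it. Your specific constructions (the \(\lfloor B/\tau\rfloor\) filling and the equal split \(B/n\)) differ from the paper's only cosmetically. They also make the second claim, about exactly \(k\) mastered skills when \(B=k\tau\), explicit, whereas the paper leaves it implicit.
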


\begin{proof}
Since \(f_\tau(x)\leq x/\tau\) and \(f_\tau(x)\leq1\), every feasible
allocation obeys \(U(\vx)\leq\min\{B/\tau,n\}\). If \(B\leq n\tau\),
any allocation with all \(x_i\leq\tau\) and \(\sum_i x_i=B\) attains
\(\sum_i x_i/\tau=B/\tau\); if \(B>n\tau\), set every \(x_i=\tau\) and
distribute the surplus \(B-n\tau\) arbitrarily among these already
saturated skills, so that each obeys \(x_i\geq\tau\) and hence
\(f_\tau(x_i)=1\), giving the value \(n\). In either case the
allocation is feasible and meets the bound.
\end{proof}

Thus, with a budget \(B=10\,000\) and enough skills available, the
model admits an optimal allocation with one mastered skill when
\(\tau=10\,000\), with two when \(\tau=5\,000\), and with ten when
\(\tau=1\,000\); it does not, however, single such an allocation out,
since for \(B=k\tau\) every schedule that wastes no practice is
equally optimal.  This already refutes any reading of the aphorism as
a universal prescription: beyond saturation, further repetition of one
kick is wasted whenever a second useful kick remains unlearned.

Two features of this model must be removed to make the
breadth-versus-depth question sharp.  First, the optimum is far from
unique: when \(B<n\tau\), every allocation with \(x_i\leq\tau\) has
value \(B/\tau\), so the model separates wasteful from useful practice
but does not distinguish specialization from diversification.  Second,
the model has no curvature: practice is either fully productive or
fully wasted.  Both defects are addressed by introducing a genuine
learning curve and a genuine aggregation rule, to which we now turn.

\section{Concave Learning and Marginal Returns}
\label{sec:additive}

Assume now that the learning curves are increasing and strictly
concave, in keeping with the long-observed tendency for gains from
practice to diminish \citep{newell1981practice}, and consider the
weighted additive objective
\begin{equation}
    U(\vx)
    =
    \sum_{i=1}^n w_i\,f_i(x_i),
    \qquad w_i>0.
    \label{eq:weighted-additive}
\end{equation}

\begin{theorem}[Equalization of Marginal Returns]
\label{thm:marginal-equalization}
Suppose each \(f_i\) is continuously differentiable, strictly
increasing, and strictly concave on \(\Rplus\).  Then
\eqref{eq:weighted-additive} has a unique maximizer
\(\vx^*\in\simplex_B\), and there exists \(\lambda>0\) with
\[
    w_i\,f_i'(x_i^*)=\lambda
    \quad\text{whenever } x_i^*>0,
    \qquad
    w_i\,f_i'(0)\leq\lambda
    \quad\text{whenever } x_i^*=0.
\]
\end{theorem}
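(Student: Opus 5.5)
The plan is to treat \eqref{eq:weighted-additive} as a concave program over a compact polytope and to read off existence, uniqueness, and the multiplier conditions in turn. \emph{Existence:} each \(f_i\) is continuous on \(\Rplus\), so \(U\) is continuous on the compact set \(\simplex_B\) and attains its maximum. \emph{Uniqueness:} \(U\) is a positive combination of strictly concave functions of distinct coordinates, so \(U\) is strictly concave on \(\Rplus^{\,n}\). If \(\vx\neq\vx'\) were both maximizers, they would differ in some coordinate \(i\). Strict concavity of \(f_i\) in that coordinate, together with concavity of the others, then gives \(U\bigl(\tfrac12(\vx+\vx')\bigr)>\tfrac12 U(\vx)+\tfrac12 U(\vx')\). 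The midpoint is feasible because \(\simplex_B\) is convex, so this contradicts maximality.

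\emph{Multiplier conditions:} rather than invoking KKT abstractly, I would argue by pairwise transfers, in the same spirit as the proof of \Cref{prop:no-overshoot}. Fix the maximizer \(\vx^*\) and take \(i\) with \(x_i^*>0\) and any \(j\neq i\). Moving \(\varepsilon\in(0,x_i^*]\) units from \(i\) to \(j\) is feasible, and the one-sided derivative of \(U\) at \(\varepsilon=0^+\) is \(-w_if_i'(x_i^*)+w_jf_j'(x_j^*)\). Optimality forces this to be \(\le 0\). This uses that \(f_j\) is \(C^1\) on \(\Rplus\), including a one-sided derivative at \(0\) when \(x_j^*=0\). Hence \(w_jf_j'(x_j^*)\le w_if_i'(x_i^*)\) for every \(j\) whenever \(x_i^*>0\). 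Applying this in both directions to two indices with positive allocation shows that \(w_if_i'(x_i^*)\) takes a common value \(\lambda\) on \(\supp\vx^*\). This support is nonempty because \(B>0\). For \(j\) outside the support the inequality gives \(w_jf_j'(0)\le\lambda\). If \(n=1\) the conditions are immediate with \(\lambda=w_1f_1'(B)\).

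\emph{Positivity of \(\lambda\):} this is the step needing the most care, since strict increase does not by itself rule out a vanishing derivative. I would argue that \(f_i\) strictly increasing and concave forces \(f_i'(x)>0\) for every \(x\). Suppose \(f_i'(x_0)=0\). Then, since \(f_i'\) is nonincreasing by concavity, \(f_i'\le0\) on \([x_0,\infty)\), so \(f_i\) is nonincreasing there. This contradicts strict monotonicity. Therefore \(\lambda=w_if_i'(x_i^*)>0\) for any \(i\) in the support, which completes the statement.
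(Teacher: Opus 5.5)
Your proof is correct, and it reaches the multiplier conditions by a different route from the paper. Existence and uniqueness are handled exactly as in the paper, via continuity on the compact convex set \(\simplex_B\) and strict concavity of \(U\). The paper then obtains the multiplier conditions in one line by citing the Karush--Kuhn--Tucker conditions. You instead derive them from the one-sided derivative of \(U\) along the feasible transfer directions, moving \(\varepsilon\) units from a practiced skill \(i\) to another skill \(j\).

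This is in effect a hand proof of KKT necessity specialized to the simplex. It is self-contained and needs no appeal to a constraint qualification, although that appeal would be harmless here since the constraints are affine. The cost is that it is tied to the simplex geometry, whereas the paper's citation would carry over verbatim to other polyhedral feasible sets.

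Your argument also supplies something the citation alone does not. In the KKT system, the multiplier \(\lambda\) attached to the equality constraint \(\sum_i x_i=B\) is unrestricted in sign. The claim \(\lambda>0\) therefore needs exactly your extra observation: a differentiable function that is strictly increasing and concave on \(\Rplus\) has \(f_i'>0\) everywhere. The paper's proof leaves that step implicit.
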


\begin{proof}
The objective is strictly concave and \(\simplex_B\) is compact and
convex, so a unique maximizer exists. The displayed relations are the
Karush--Kuhn--Tucker conditions for maximization over \(\simplex_B\);
see \citet[Ch.~5]{boyd2004convex}.
\end{proof}

\begin{corollary}[Equal Practice Under Additive Effectiveness]
\label{cor:equal-practice}
If \(f_1=\cdots=f_n=f\) with \(f\) strictly increasing and strictly
concave, and \(w_1=\cdots=w_n\), then the unique optimum is
\(x_1^*=\cdots=x_n^*=B/n\).
\end{corollary}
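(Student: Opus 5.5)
The plan is to derive the corollary directly from \Cref{thm:marginal-equalization}, using symmetry and uniqueness; no new analysis should be needed. With all \(f_i=f\) and all \(w_i=w>0\), the hypotheses of \Cref{thm:marginal-equalization} hold once \(f\) is continuously differentiable. That theorem then supplies a unique maximizer \(\vx^*\in\simplex_B\) of \(U(\vx)=w\sum_i f(x_i)\).

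The first and main argument is a symmetry argument. The objective is invariant under any permutation \(\sigma\) of the coordinates, and so is \(\simplex_B\). Hence, if \(\vx^*\) is a maximizer, so is \(\sigma\vx^*\). Uniqueness then forces \(\sigma\vx^*=\vx^*\) for every \(\sigma\), so all coordinates of \(\vx^*\) are equal. The constraint \(\sum_i x_i^*=B\) then gives \(x_i^*=B/n\).

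This argument uses differentiability only through the appeal to \Cref{thm:marginal-equalization}. Since the corollary assumes only strict concavity and monotonicity, I would instead argue uniqueness and existence directly, so as not to rely on \(C^1\).
\begin{itemize}
\item \emph{Existence.} A concave function on \(\Rplus\) is continuous on \((0,\infty)\). Combined with monotonicity and \(f(0)=0\), it is at least upper semicontinuous at \(0\), since \(f(0)\ge\limsup_{x\to0^+}f(x)\) fails only if \(f\) jumps up, and a jump at \(0\) is compatible with concavity. An upper semicontinuous function on the compact set \(\simplex_B\) attains its maximum.
\item \emph{Uniqueness.} \(U\) is strictly concave on the convex set \(\simplex_B\), so two distinct maximizers \(\vx,\vx'\) would give \(U\bigl((\vx+\vx')/2\bigr)>U(\vx)\), a contradiction.
\end{itemize}

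A cleaner route, avoiding existence questions altogether, is to compare an arbitrary feasible \(\vx\) directly with the balanced point \(\bar\vx=(B/n,\dots,B/n)\). By Jensen's inequality for the strictly concave \(f\),
\[
\frac1n\sum_{i=1}^n f(x_i)\le f\Bigl(\frac1n\sum_{i=1}^n x_i\Bigr)=f(B/n),
\]
with equality iff all \(x_i\) are equal, that is, iff \(\vx=\bar\vx\). So \(\bar\vx\) is the unique maximizer. I would present this Jensen argument as the proof and mention the symmetry-plus-\Cref{thm:marginal-equalization} route as a remark.

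The only potential obstacle is the regularity gap between the hypotheses of the corollary and those of \Cref{thm:marginal-equalization}. The Jensen argument sidesteps it entirely, so I expect no real difficulty. The KKT conditions are also consistent with this answer: \(f'(B/n)\) is a common value \(\lambda/w\), and \(\lambda>0\) because \(f\) is strictly increasing.
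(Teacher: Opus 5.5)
Your Jensen argument is the paper's proof, including the point that strict concavity alone suffices and no differentiability is needed. One caution about the side route you set aside: a concave \(f\) with \(f(0)=0\) can jump \emph{up} at \(0\) (e.g.\ \(f(x)=1+\sqrt{x}\) for \(x>0\)), so concavity gives lower, not upper, semicontinuity at \(0\), and your existence bullet is wrong as stated; the Jensen argument does not need it, so your proof is unaffected.
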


\begin{proof}
This follows directly from strict concavity, without any
differentiability assumption.  By Jensen's inequality
\citep[\S3.1]{hardy1952inequalities}, every \(\vx\in\simplex_B\)
satisfies
\[
    \frac1n\sum_{i=1}^n f(x_i)
    \leq
    f\!\Bigl(\frac1n\sum_{i=1}^n x_i\Bigr)
    =
    f\!\Bigl(\frac{B}{n}\Bigr),
\]
with equality, by strict concavity, if and only if \(x_1=\cdots=x_n\);
the constraint \(\sum_i x_i=B\) then forces \(x_i=B/n\). Hence
\(\sum_i f(x_i)\leq n\,f(B/n)\), attained uniquely at the balanced
allocation \(x_i^*=B/n\).
\end{proof}

\Cref{cor:equal-practice} points in precisely the opposite direction
from the aphorism: identical skills with concave returns call for
equal practice across the entire repertoire.  The discrepancy is not a
paradox---the additive objective \eqref{eq:weighted-additive} assumes
that every unit of acquired skill contributes independently to
effectiveness.  The next two sections replace that assumption and, in
doing so, restore the possibility that concentration is optimal.

\section{Aggregation: From Total Skill to a Single Best Technique}
\label{sec:aggregation}

The same learning curve can produce entirely different optimal
allocations under different notions of effectiveness.  Throughout this
section all skills share one increasing learning curve \(f\), and we
write \(\vs=(f(x_1),\ldots,f(x_n))\) for the vector of attained
skills.

At one extreme, total skill \(A_1(\vs)=\sum_i s_i\) returns us to the
additive objective, for which \Cref{cor:equal-practice} prescribes
equal practice. At the other extreme, effectiveness may be governed
entirely by the practitioner's strongest technique,
\[
    A_\infty(\vs)=\max_{1\leq i\leq n} s_i.
\]

\begin{proposition}[Complete Specialization]
\label{prop:max-specialization}
If \(f\) is increasing, then the allocation \((B,0,\ldots,0)\), up to
permutation, maximizes \(\max_i f(x_i)\) over \(\simplex_B\). If \(f\)
is strictly increasing, these are the only optima.
\end{proposition}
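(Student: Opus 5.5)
The argument is a direct comparison with the single value \(f(B)\); no earlier result is needed beyond monotonicity of \(f\).

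\textbf{Step 1: the upper bound.} Take any \(\vx\in\simplex_B\). Since every coordinate is nonnegative and the coordinates sum to \(B\), each satisfies \(0\leq x_i\leq B\). Because \(f\) is increasing, \(f(x_i)\leq f(B)\) for every \(i\), and hence
\[
    \max_{1\leq i\leq n} f(x_i)\leq f(B).
\]

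\textbf{Step 2: attainment.} The allocation \((B,0,\ldots,0)\) is feasible and its first coordinate gives the value \(f(B)\). So it attains the bound of Step 1 and is optimal. Permutations of it attain the same value, since \(A_\infty\) is symmetric in its arguments. This proves the first assertion.

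\textbf{Step 3: uniqueness under strict monotonicity.} Suppose \(f\) is strictly increasing and \(\vx\) is optimal, so that \(\max_i f(x_i)=f(B)\). Pick an index \(i\) at which the maximum is achieved, so \(f(x_i)=f(B)\). If \(x_i<B\), strict monotonicity would give \(f(x_i)<f(B)\), a contradiction; therefore \(x_i=B\). The budget constraint then forces every other coordinate to be \(0\), because all coordinates are nonnegative and they sum to \(B\). Thus \(\vx\) is a permutation of \((B,0,\ldots,0)\).

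There is no real obstacle in this argument. The only care needed is in Step 3: we must use nonnegativity of the coordinates to pass from \(x_i=B\) to all other coordinates vanishing, and we must rely on strict monotonicity rather than mere monotonicity. Under mere monotonicity, \(f\) could be constant near \(B\), for instance under hard saturation \eqref{eq:hard-saturation} with \(\tau<B\). Then other allocations would also be optimal, which is why uniqueness is claimed only in the strict case.
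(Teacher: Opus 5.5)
Your proof is correct and follows the paper's argument: you bound \(\max_i f(x_i)\) by \(f(B)\) using \(x_i\leq B\) and monotonicity, and note that putting the whole budget on one skill attains this bound. Your Step 3 also makes explicit the uniqueness clause under strict monotonicity, which the paper's one-line proof leaves implicit.
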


\begin{proof}
For every feasible \(\vx\) one has \(\max_i x_i\leq B\), so, \(f\)
being increasing, \(\max_i f(x_i)=f(\max_i x_i)\leq f(B)\), with
equality when a single skill receives the entire budget.
\end{proof}

The learning curve has not changed between
\Cref{cor:equal-practice,prop:max-specialization}; only the valuation
of acquired skill has. To interpolate, take for \(p\geq1\) the family
\begin{equation}
    U_p(\vx)
    =
    \Bigl(\sum_{i=1}^n f(x_i)^p\Bigr)^{1/p},
    \label{eq:lp-objective}
\end{equation}
which recovers total skill at \(p=1\) and, as \(p\to\infty\), the
maximum \(A_\infty\). We interpret the parameter \(p\) as measuring
the degree to which effectiveness rewards excellence in a few
techniques over broad competence.  Because the outer root is
increasing, maximizing \eqref{eq:lp-objective} is equivalent to
maximizing \(\sum_i g_p(x_i)\), where
\begin{equation}
    g_p(x):=f(x)^p.
    \label{eq:gp-def}
\end{equation}

The curvature of \(g_p\), not of \(f\) alone, drives the answer.

\begin{theorem}[Curvature Principle]
\label{thm:curvature-principle}
Let \(f:\Rplus\to\Rplus\) be increasing and let \(g_p\) be as in
\eqref{eq:gp-def}.
\begin{enumerate}[label=\textup{(\roman*)}]
\item If \(g_p\) is strictly concave on \([0,B]\), then the unique
maximizer of \(\sum_i g_p(x_i)\) over \(\simplex_B\) is
\(x_i^*=B/n\) for all \(i\).
\item If \(g_p\) is strictly convex on \([0,B]\) and \(g_p(0)=0\),
  then every maximizer is completely specialized:
  \(\vx^*=(B,0,\ldots,0)\), up to permutation.
\end{enumerate}
\end{theorem}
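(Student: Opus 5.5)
Part (i) follows the route of \Cref{cor:equal-practice}, with \(g_p\) in place of \(f\). Every coordinate of a feasible \(\vx\in\simplex_B\) lies in \([0,B]\), so Jensen's inequality for the strictly concave function \(g_p\) on \([0,B]\) gives
\[
\frac1n\sum_{i=1}^n g_p(x_i)\le g_p\Bigl(\frac Bn\Bigr),
\]
with equality exactly when \(x_1=\cdots=x_n\). The budget constraint then forces \(x_i=B/n\). No differentiability is needed, and monotonicity of \(f\) plays no role here.

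For part (ii) I would show that concentrating mass strictly improves the objective. The key observation is that a convex function with \(g_p(0)=0\) is strictly superadditive on \([0,B]\):
\[
g_p(a)+g_p(b)<g_p(a+b)\qquad\text{whenever } a,b>0,\ a+b\le B.
\]
To prove this, write \(a=\tfrac{a}{a+b}(a+b)+\tfrac{b}{a+b}\cdot0\). Strict convexity then gives \(g_p(a)<\tfrac{a}{a+b}g_p(a+b)+\tfrac{b}{a+b}g_p(0)=\tfrac{a}{a+b}g_p(a+b)\). The same argument gives \(g_p(b)<\tfrac{b}{a+b}g_p(a+b)\). Adding the two inequalities yields the claim.

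Now take any feasible \(\vx\) with at least two positive coordinates, say \(x_i,x_j>0\). Replace them by \(x_i+x_j\) and \(0\). The new allocation is still in \(\simplex_B\), and by superadditivity together with \(g_p(0)=0\) its objective is strictly larger. So no maximizer can have two positive coordinates.

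The compressed allocation can be reached by repeated merging, so a direct comparison is also available. Iterating the merge shows that \(\sum_i g_p(x_i)\le g_p(B)\) for every feasible \(\vx\), with strict inequality unless \(\vx\) has a single nonzero coordinate, which then equals \(B\). The allocation \((B,0,\ldots,0)\) attains \(g_p(B)+(n-1)g_p(0)=g_p(B)\). Hence the maximizers are exactly the permutations of \((B,0,\ldots,0)\), and no separate compactness or continuity argument is required for existence.

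The only delicate point is the strictness in superadditivity. Strict convexity applies here because \(a\) is a proper convex combination of the distinct points \(0\) and \(a+b\), both in \([0,B]\). That is precisely why the hypothesis \(g_p(0)=0\) and the restriction to \([0,B]\) appear in the statement.
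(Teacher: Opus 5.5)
Your proof is correct and follows the paper's own argument: Jensen's inequality with its strict-concavity equality case for (i), and for (ii) strict superadditivity obtained by writing each of $x_i, x_j$ as a convex combination of $0$ and $x_i+x_j$, followed by iterated merging of positive coordinates. Your explicit bound $\sum_i g_p(x_i)\le g_p(B)$ also makes clear that the specialized allocation is attained as a maximizer, a point the paper's proof leaves implicit.
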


\begin{proof}
For (i), Jensen's inequality \citep[\S3.1]{hardy1952inequalities}
gives \(\frac1n\sum_i g_p(x_i)\leq g_p\bigl(\frac1n\sum_i
x_i\bigr)=g_p(B/n)\), with equality only when all coordinates
coincide. For (ii), a convex function vanishing at the origin is
superadditive: for \(x_i,x_j>0\), write each of \(x_i\) and \(x_j\) as
a convex combination of \(0\) and \(x_i+x_j\), namely
\(x_i=t(x_i+x_j)\) and \(x_j=(1-t)(x_i+x_j)\) with
\(t=x_i/(x_i+x_j)\); convexity and \(g_p(0)=0\) then give
\(g_p(x_i)\leq t\,g_p(x_i+x_j)\) and
\(g_p(x_j)\leq(1-t)g_p(x_i+x_j)\), whose sum is
\(g_p(x_i)+g_p(x_j)\leq g_p(x_i+x_j)\), strictly under strict
convexity.  Merging two positive coordinates therefore strictly
increases the objective, and iterating leaves a single positive
coordinate.
\end{proof}

\begin{example}[Power-Law Learning]
\label{ex:power-law}
Let \(f(x)=x^\alpha\) with \(0<\alpha<1\), so that \(g_p(x)=x^{\alpha p}\).
Then \(g_p\) is strictly concave for \(\alpha p<1\) and strictly convex for
\(\alpha p>1\), while at \(\alpha p=1\) one has
\(\sum_i f(x_i)^{p}=\sum_i x_i=B\) for every feasible allocation.
\end{example}

\begin{corollary}[Specialization Threshold]
\label{cor:threshold}
For \(f(x)=x^\alpha\), \(0<\alpha<1\), and the objective
\eqref{eq:lp-objective}, the optimum changes character at
\(p_c=1/\alpha\): below \(p_c\) equal practice is uniquely optimal, above
\(p_c\) complete specialization is optimal, and at \(p_c\) every feasible
allocation is optimal.
\end{corollary}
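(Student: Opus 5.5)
The corollary follows by combining \Cref{ex:power-law} with \Cref{thm:curvature-principle}, after reducing the objective \eqref{eq:lp-objective} to \(\sum_i g_p(x_i)\). Since \(t\mapsto t^{1/p}\) is strictly increasing on \(\Rplus\), the maximizers of \(U_p\) over \(\simplex_B\) coincide with those of \(\sum_i x_i^{\alpha p}\). The proof then splits into three cases according to the sign of \(\alpha p-1\), that is, according to whether \(p<p_c\), \(p>p_c\), or \(p=p_c\), where \(p_c=1/\alpha\).

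\textbf{Case \(p<1/\alpha\).} Here \(0<\alpha p<1\), so \(g_p(x)=x^{\alpha p}\) is strictly concave on \(\Rplus\). One can check this from \(g_p''(x)=\alpha p(\alpha p-1)x^{\alpha p-2}<0\) for \(x>0\), together with continuity at \(0\). Part (i) of \Cref{thm:curvature-principle} then gives the unique maximizer \(x_i^*=B/n\).

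\textbf{Case \(p>1/\alpha\).} Now \(\alpha p>1\), and \(g_p\) is strictly convex with \(g_p(0)=0\). Part (ii) of \Cref{thm:curvature-principle} says every maximizer has the form \((B,0,\ldots,0)\) up to permutation. It still has to be shown that an optimum exists. The simplex is compact and the objective is continuous, so a maximizer exists. Alternatively, each such vector has value \(B^{\alpha p}\), and by superadditivity every allocation is bounded by this value. Hence complete specialization is optimal.

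\textbf{Case \(p=1/\alpha\).} Here \(\sum_i x_i^{\alpha p}=\sum_i x_i=B\) for every feasible \(\vx\), as observed in \Cref{ex:power-law}. The objective is therefore constant on the simplex, and every allocation is optimal.

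The only step requiring care is confirming the hypotheses of \Cref{thm:curvature-principle}. Strict concavity and strict convexity must hold on the whole closed interval \([0,B]\), including the endpoint \(0\), where \(g_p''\) may blow up. To handle this, I would appeal to the fact that a continuous function on \([0,B]\) that is strictly concave (respectively, strictly convex) on \((0,B]\) is strictly concave (respectively, strictly convex) on \([0,B]\). This fact follows directly from the chord inequality by a limiting argument. Beyond that, the corollary is a direct specialization of earlier results.
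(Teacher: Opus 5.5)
Your proposal is correct and follows the paper's proof, which simply cites \Cref{thm:curvature-principle} together with \Cref{ex:power-law}; your checks on existence of a maximizer in the convex case and on the endpoint \(0\) fill in details the paper leaves implicit. One small caution: a limiting argument by itself only gives weak concavity at the endpoint, but strictness is immediate here because \((1-t)^{\alpha p}>1-t\) (resp.\ \(<\)) for \(0<t<1\) when \(\alpha p<1\) (resp.\ \(\alpha p>1\)).
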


\begin{proof}
Immediate from \Cref{thm:curvature-principle,ex:power-law}.
\end{proof}

The power-law curve is instructive because it is degenerate: it never
yields an intermediate optimum, jumping directly from full breadth to
full specialization. To locate genuinely intermediate optima we must
look past curvature to the finer invariant of the next section.

\section[The Efficiency Function and the Optimal Repertoire]
{The Efficiency Function and the Optimal\\ Repertoire}
\label{sec:efficiency}

We now give the main reduction, replacing the \(n\)-variable problem
\(\max_{\simplex_B}\sum_i g_p(x_i)\) by the maximization of one scalar
function of a single real variable, and it identifies exactly when an
optimal schedule must be \emph{balanced}.

Throughout, \(f:\Rplus\to\Rplus\) is continuous with \(f(0)=0\), and
\(g_p=f^p\) as in \eqref{eq:gp-def}. Since \(g_p(0)=0\), a coordinate
that receives no practice contributes nothing, so only the practiced
skills matter.

\subsection{The Efficiency Reduction}
\label{subsec:efficiency}

\begin{definition}[Efficiency Function]
\label{def:efficiency}
The \emph{efficiency} of a practice level \(x>0\) is
\begin{equation}
    \eff(x)
    :=
    \frac{g_p(x)}{x}
    =
    \frac{f(x)^p}{x}.
    \label{eq:efficiency}
\end{equation}
\end{definition}

The efficiency measures effectiveness produced per unit of practice at
level \(x\). Its supremum controls the whole problem.

\begin{theorem}[Efficiency Bound]
\label{thm:efficiency-bound}
For every \(\vx\in\simplex_B\),
\[
    \sum_{i=1}^n g_p(x_i)
    \leq
    B\,\sup_{0<x\leq B}\eff(x).
\]
\end{theorem}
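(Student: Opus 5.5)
The plan is a term-by-term comparison. Each practiced coordinate is rewritten as practice times efficiency, and the budget constraint then finishes the argument. Write \(S=\{i: x_i>0\}\) for the support of \(\vx\), and let \(R=\sup_{0<x\leq B}\eff(x)\). Since \(f(0)=0\) gives \(g_p(0)=0\), coordinates outside \(S\) contribute nothing, so
\[
    \sum_{i=1}^n g_p(x_i)=\sum_{i\in S} g_p(x_i).
\]

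First, for each \(i\in S\), feasibility gives \(0<x_i\leq\sum_j x_j=B\), because all coordinates are nonnegative. Hence \(x_i\) lies in the range over which the supremum is taken. Second, by Definition~\ref{def:efficiency}, \(g_p(x_i)=x_i\,\eff(x_i)\leq x_i R\) for each such \(i\). Summing over \(S\) and using \(\sum_{i\in S}x_i=B\) gives
\[
    \sum_{i\in S} g_p(x_i)\leq R\sum_{i\in S}x_i = B\,R,
\]
which is the claimed bound.

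One point needs care: the supremum might be \(+\infty\), since \(f(x)^p/x\) can blow up as \(x\to0^+\), for example when \(f(x)=x^\alpha\) with \(\alpha p<1\). In that case the inequality holds trivially, with the convention \(B\cdot\infty=\infty\), so no finiteness assumption is needed. Multiplying \(x_iR\) by \(x_i>0\) is harmless in either case.

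The only step that could be called an obstacle is keeping the zero coordinates out of the efficiency comparison, because \(\eff\) is undefined at \(0\). Restricting to the support \(S\) handles this, and the normalization \(f(0)=0\) is exactly what makes that restriction lossless.
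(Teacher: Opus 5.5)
Your proposal is correct and follows essentially the same route as the paper's proof: restrict to the support \(S\), rewrite \(g_p(x_i)=x_i\,r(x_i)\le x_i\sup_{0<x\le B}r(x)\) using \(0<x_i\le B\), and sum using \(\sum_{i\in S}x_i=B\). Your remark that the bound holds trivially when the supremum is infinite is a harmless addition that the paper leaves implicit.
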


\begin{proof}
Let \(S=\supp\vx\). Coordinates outside \(S\) contribute
\(g_p(0)=0\), and for \(i\in S\) we have \(g_p(x_i)=x_i\,\eff(x_i)\).
Hence
\[
    \sum_{i=1}^n g_p(x_i)
    =
    \sum_{i\in S} x_i\,\eff(x_i)
    \leq
    \Bigl(\sup_{0<x\leq B}\eff(x)\Bigr)\sum_{i\in S} x_i
    =
    B\,\sup_{0<x\leq B}\eff(x),
\]
where \(x_i\leq B\) for each \(i\) justifies restricting the supremum to
\((0,B]\).
\end{proof}

The bound is attained precisely by an equal division of the budget, when
the arithmetic permits it.

\begin{definition}[Balanced Allocation]
\label{def:balanced}
For an integer \(k\) with \(1\leq k\leq n\), the \emph{balanced allocation
of order \(k\)} assigns \(B/k\) to each of \(k\) skills and \(0\) to the
rest.
\end{definition}

\begin{theorem}[Structural Reduction]
\label{thm:structural}
Suppose \(\eff\) attains its supremum over \((0,B]\) at a unique point
\(x^*\), and that \(k^*:=B/x^*\) is an integer with \(1\leq k^*\leq n\).
Then every maximizer of \(\max_{\vx\in\simplex_B}\sum_i g_p(x_i)\) is a
balanced allocation of order \(k^*\); that is, exactly \(k^*\) coordinates
equal \(B/k^*\) and the rest vanish. The maximum value is
\[
    B\,\eff(x^*)
    =
    k^*\,g_p\!\Bigl(\frac{B}{k^*}\Bigr).
\]
\end{theorem}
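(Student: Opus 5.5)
We prove the theorem in three steps: attainment of the bound, a characterization of the equality case, and a counting argument.

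\emph{Attainment.} Write \(M:=\sup_{0<x\leq B}\eff(x)=\eff(x^*)\). The balanced allocation of order \(k^*\) is feasible, since \(k^*\leq n\) and \(k^*\cdot(B/k^*)=B\). Each of its positive coordinates equals \(B/k^*=x^*\). Its value is therefore
\[
k^*g_p(x^*)=k^*x^*\eff(x^*)=B\,\eff(x^*).
\]
By \Cref{thm:efficiency-bound}, no feasible allocation exceeds \(BM=B\,\eff(x^*)\). Hence this value is the maximum, and the two displayed expressions for it coincide because \(x^*=B/k^*\).

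\emph{Equality case.} Now let \(\vx\) be any maximizer and set \(S=\supp\vx\), which is nonempty because \(B>0\). Following the proof of \Cref{thm:efficiency-bound}, we write
\[
BM-\sum_i g_p(x_i)=\sum_{i\in S}x_i\bigl(M-\eff(x_i)\bigr).
\]
Every term on the right is nonnegative, since \(0<x_i\leq B\) gives \(\eff(x_i)\leq M\). Because \(\vx\) attains the value \(BM\), the sum is zero, so each term vanishes. As \(x_i>0\) for \(i\in S\), this forces \(\eff(x_i)=M\) for every \(i\in S\).

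\emph{Counting.} Uniqueness of the maximizer of \(\eff\) on \((0,B]\) now gives \(x_i=x^*\) for every \(i\in S\). The budget constraint then reads \(|S|\,x^*=B\), so \(|S|=B/x^*=k^*\). Thus \(\vx\) has exactly \(k^*\) coordinates equal to \(B/k^*\) and all others zero, which is the balanced allocation of order \(k^*\).

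None of these steps is hard. The only point needing care is the equality case: we must use strict positivity of the weights \(x_i\) on \(S\), and we must work with the sharp identity above rather than with the inequality alone. Continuity of \(f\) is not needed, since the hypothesis already guarantees that the supremum is attained.
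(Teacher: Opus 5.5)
Your proof is correct and follows the paper's argument essentially step for step: the balanced allocation of order \(k^*\) attains the bound of \Cref{thm:efficiency-bound}, equality in that bound forces \(\eff(x_i)=\eff(x^*)\) on the support, and uniqueness of \(x^*\) together with the budget constraint gives \(|S|=k^*\). Your identity \(BM-\sum_i g_p(x_i)=\sum_{i\in S}x_i\bigl(M-\eff(x_i)\bigr)\) simply makes explicit the equality step that the paper states in one line.
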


\begin{proof}
The balanced allocation of order \(k^*\) has value
\(k^* g_p(x^*)=k^* x^*\eff(x^*)=B\,\eff(x^*)\), which meets the bound of
\Cref{thm:efficiency-bound}; hence it is optimal, and the maximum value is
as stated. Conversely, let \(\vx\) be any maximizer, with support \(S\).
Equality in the proof of \Cref{thm:efficiency-bound} forces
\(\eff(x_i)=\eff(x^*)\) for every \(i\in S\). Since \(x^*\) is the unique
maximizer of \(\eff\), this gives \(x_i=x^*\) for all \(i\in S\), and then
\(|S|\,x^*=\sum_{i\in S}x_i=B\) forces \(|S|=B/x^*=k^*\).
\end{proof}

\Cref{thm:structural} is the rigorous form of the ``how many kicks?''
reduction. Whenever the efficiency function has a well-defined optimal
practice level \(x^*\) that divides the budget evenly, the optimal schedule
is unambiguous: practice exactly \(k^*=B/x^*\) techniques, each to depth
\(x^*\). The multivariable problem collapses to the search for a single
number.

It is convenient to record the reduction in the coordinates of the original
aphorism. For a balanced allocation of order \(k\), the objective
\eqref{eq:lp-objective} equals
\begin{equation}
    V_p(k)
    :=
    k^{1/p}\,f\!\Bigl(\frac{B}{k}\Bigr),
    \label{eq:Vk}
\end{equation}
and a direct computation gives the identity
\begin{equation}
    V_p(k)^p
    =
    k\,g_p\!\Bigl(\frac{B}{k}\Bigr)
    =
    B\,\eff\!\Bigl(\frac{B}{k}\Bigr).
    \label{eq:Vk-efficiency}
\end{equation}

\begin{definition}[Optimal Balanced Repertoire Size]
\label{def:repertoire}
The \emph{optimal balanced repertoire size} is
\(
    k^*\in\argmax_{1\leq k\leq n} V_p(k),
\)
the number of equally practiced skills that maximizes the objective among
balanced allocations. Outside the hypotheses of \Cref{thm:structural} it
need not equal the support size of a global optimum of the full problem.
\end{definition}

By \eqref{eq:Vk-efficiency}, maximizing \(V_p\) over the integers
\(1,\ldots,n\) is the same as maximizing the efficiency \(\eff\) over the
grid \(\{B/k:1\leq k\leq n\}\), and \Cref{thm:structural} states that when
the unconstrained efficiency optimum \(x^*\) lands on this grid, it is the
true optimum of the full problem. When \(x^*\) does not land on the grid,
\Cref{thm:efficiency-bound} still furnishes the upper bound
\(B\,\eff(x^*)\), and the best balanced schedule is found by maximizing
\(\eff(B/k)\) over \(k\in\{1,\ldots,n\}\). When \(\eff\) is unimodal, as it
is under the hypotheses of \Cref{cor:elasticity-structural}, this maximizer
is the better of the two grid points bracketing \(x^*\); in general the grid
maximizer need not bracket \(x^*\), and the resulting shortfall from
\(B\,\eff(x^*)\) is the price of integrality.

\subsection{The Elasticity Criterion}
\label{subsec:elasticity}

To find \(x^*\) we differentiate the efficiency. Assume \(f\) is
differentiable and positive on \((0,B]\). Writing \(\log\eff(x)=p\log
f(x)-\log x\), we obtain
\begin{equation}
    \bigl(\log\eff\bigr)'(x)
    =
    p\,\frac{f'(x)}{f(x)}-\frac1x
    =
    \frac1x\bigl(p\,\elas_f(x)-1\bigr),
    \label{eq:logderiv}
\end{equation}
where
\begin{equation}
    \elas_f(x)
    :=
    \frac{x\,f'(x)}{f(x)}
    \label{eq:elasticity}
\end{equation}
is the \emph{elasticity} of the learning curve, the local percentage gain
in skill per percentage gain in practice. Setting \eqref{eq:logderiv} to
zero yields the stationarity condition
\begin{equation}
    \elas_f(x^*)=\frac1p.
    \label{eq:elasticity-condition}
\end{equation}

\begin{proposition}[Elasticity Criterion]
\label{prop:elasticity}
An interior maximizer \(x^*\in(0,B)\) of the efficiency function satisfies
\(\elas_f(x^*)=1/p\). Moreover \(\elas_f(x)>1/p\) means that efficiency is
locally increasing in \(x\), so that deeper practice of fewer skills is
locally preferable, whereas \(\elas_f(x)<1/p\) means that efficiency is
locally decreasing, so that broader practice of more skills is locally
preferable.
\end{proposition}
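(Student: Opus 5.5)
The plan is to work entirely with \(\log\eff\), which is legitimate under the standing assumptions of \Cref{subsec:elasticity}: \(f\) is differentiable and positive on \((0,B]\), so \(\eff(x)=f(x)^p/x\) is positive and differentiable there. Since \(\log\) is strictly increasing and differentiable on \((0,\infty)\), \(\eff\) and \(\log\eff\) have the same maximizers. For the same reason, \(\eff'(x)\) has the same sign as \((\log\eff)'(x)\), because \(\eff'=\eff\cdot(\log\eff)'\) and \(\eff>0\). Everything then rests on the identity \eqref{eq:logderiv}, \((\log\eff)'(x)=x^{-1}\bigl(p\,\elas_f(x)-1\bigr)\). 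This follows from the chain rule applied to \(p\log f(x)-\log x\), together with the definition \eqref{eq:elasticity}.

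For the first assertion, let \(x^*\in(0,B)\) be a maximizer of \(\eff\) over \((0,B]\). Since \(x^*\) is interior and \(\eff\) is differentiable there, Fermat's theorem gives \(\eff'(x^*)=0\), and hence \((\log\eff)'(x^*)=0\). Multiplying \eqref{eq:logderiv} by \(x^*>0\) yields \(p\,\elas_f(x^*)-1=0\), which is \(\elas_f(x^*)=1/p\). We use \(p\geq1>0\) to divide by \(p\).

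For the second assertion, the factor \(1/x\) in \eqref{eq:logderiv} is positive. So \(\elas_f(x)>1/p\) holds if and only if \((\log\eff)'(x)>0\), if and only if \(\eff'(x)>0\), and symmetrically for the reverse inequality.

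The remaining point is to make precise what ``locally increasing'' and ``locally preferable'' mean, and this is the only real obstacle. A positive derivative at a single point does not give monotonicity on a neighbourhood without continuity of \(f'\). I would therefore read ``locally increasing at \(x\)'' in the pointwise sense: \(\eff(y)<\eff(x)<\eff(z)\) for \(y<x<z\) sufficiently close to \(x\). This follows directly from \(\eff'(x)>0\) via the definition of the derivative. If \(f'\) is continuous, the inequality persists on a neighbourhood and \(\eff\) is strictly increasing there.

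The interpretation in terms of skills then comes from \eqref{eq:Vk-efficiency}, \(V_p(k)^p=B\,\eff(B/k)\). Increasing the depth \(x=B/k\), that is, practising fewer skills more deeply, raises the balanced value exactly when \(\eff\) increases in \(x\). So \(\elas_f>1/p\) favours depth and \(\elas_f<1/p\) favours breadth, as claimed.
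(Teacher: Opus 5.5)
Your proposal is correct and follows the paper's own argument: the stationarity condition comes from the vanishing of \eqref{eq:logderiv} at an interior maximizer, and the monotonicity claims are read off the sign of \(p\,\elas_f(x)-1\), with larger depth \(x=B/k\) corresponding to fewer skills. You also note that, without continuity of \(f'\), a positive derivative gives only pointwise monotonicity rather than monotonicity on a neighbourhood; this is a precision the paper leaves implicit.
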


\begin{proof}
Stationarity is \eqref{eq:elasticity-condition}. The monotonicity
statements read off the sign of \(p\,\elas_f(x)-1\) in
\eqref{eq:logderiv}, since deeper practice per skill corresponds to larger
\(x=B/k\) and hence smaller \(k\).
\end{proof}

Equation \eqref{eq:elasticity-condition} is the compact answer to
``how many kicks?'': the optimal breadth of practice is fixed by the
balance of two dimensionless quantities, the elasticity of learning
and the reciprocal aggregation parameter \(1/p\). The following
consequence isolates the case in which the balance is decisive.

\begin{corollary}[Unimodal Efficiency]
\label{cor:elasticity-structural}
Suppose \(\elas_f\) is strictly decreasing on \((0,B]\) and satisfies
\(\elas_f(x^*)=1/p\) at some \(x^*\in(0,B]\), with \(k^*:=B/x^*\) an integer
in \(\{1,\ldots,n\}\). Then \(x^*\) is the unique maximizer of \(\eff\) over
\((0,B]\), and every optimum of \(\max_{\vx\in\simplex_B}\sum_i g_p(x_i)\) is
a balanced allocation of order \(k^*\).
\end{corollary}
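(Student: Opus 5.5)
The plan is to reduce the corollary to \Cref{thm:structural}. That theorem already delivers the conclusion about optima once we know two things: that \(\eff\) attains its supremum over \((0,B]\) at the unique point \(x^*\), and that \(k^*=B/x^*\) is an integer in \(\{1,\ldots,n\}\). The integrality is assumed, so the whole task is to show that \(x^*\) is the unique maximizer of \(\eff\) on \((0,B]\). For this I will use the logarithmic derivative \eqref{eq:logderiv}, under the standing assumption of \Cref{subsec:elasticity} that \(f\) is differentiable and positive on \((0,B]\). Positivity makes \(\eff>0\) and \(\log\eff\) well defined there.

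First I will read off the sign of \((\log\eff)'(x)=\frac1x\bigl(p\,\elas_f(x)-1\bigr)\). Because \(\elas_f\) is strictly decreasing and \(\elas_f(x^*)=1/p\), we have \(p\,\elas_f(x)-1>0\) for \(0<x<x^*\) and \(p\,\elas_f(x)-1<0\) for \(x^*<x\leq B\). Hence \(\log\eff\), and therefore \(\eff\), is strictly increasing on \((0,x^*]\) and strictly decreasing on \([x^*,B]\). To get strict monotonicity from the sign of the derivative I will apply the mean value theorem to the differentiable function \(\log\eff\) on subintervals. The derivative vanishes only at the single point \(x^*\), so it is strictly positive (respectively negative) on the open intervals, and this gives strict inequalities. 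The case \(x^*=B\) is simply the situation where the decreasing piece is empty.

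From this unimodality, for any \(x\in(0,B]\) with \(x\neq x^*\) we get \(\eff(x)<\eff(x^*)\). So \(\sup_{0<x\leq B}\eff(x)\) is attained, and it is attained only at \(x^*\). This is exactly the hypothesis of \Cref{thm:structural}. Invoking that theorem, every maximizer of \(\sum_i g_p(x_i)\) over \(\simplex_B\) is the balanced allocation of order \(k^*\).

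The only delicate step is the passage from ``derivative positive away from one point'' to strict monotonicity on each side. Here I must be careful that \(\log\eff\) is differentiable throughout \((0,B]\), which requires \(f>0\) there. If one worries that \(f\) might vanish somewhere in \((0,B]\), I would note that \(\elas_f(x^*)\) being defined already presumes \(f(x^*)>0\), and I will rely on the section's standing positivity assumption. Nothing beyond the mean value theorem and \Cref{thm:structural} is needed.
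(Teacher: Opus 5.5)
Your proposal is correct and follows the paper's proof essentially step for step: the sign of \((\log\eff)'\) from \eqref{eq:logderiv}, together with the strict decrease of \(\elas_f\) through \(1/p\) at \(x^*\), gives strict increase of \(\eff\) on \((0,x^*]\) and strict decrease on \([x^*,B]\), hence a unique maximizer, and \Cref{thm:structural} then finishes the argument. Your mean-value-theorem justification of strict monotonicity and your remark on the positivity of \(f\) only make explicit details that the paper leaves implicit.
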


\begin{proof}
By \eqref{eq:logderiv}, \((\log\eff)'\) has the sign of
\(p\,\elas_f(x)-1\). Since \(\elas_f\) strictly decreases and equals \(1/p\)
at \(x^*\), this sign is positive for \(x<x^*\) and, when \(x^*<B\), negative
for \(x^*<x\leq B\); hence \(\eff\) strictly increases on \((0,x^*]\) and
strictly decreases on \([x^*,B]\) (the latter interval being degenerate when
\(x^*=B\)), so \(x^*\) is its unique maximizer. \Cref{thm:structural} then
applies.
\end{proof}

We revisit the two running examples in this light.

\begin{example}[Power-Law Learning, Revisited]
\label{ex:power-law-elasticity}
For \(f(x)=x^\alpha\) the elasticity is the constant \(\elas_f(x)=\alpha\),
so \eqref{eq:elasticity-condition} has a solution only in the degenerate
case \(\alpha=1/p\). Correspondingly \(\eff(x)=x^{\alpha p-1}\) is
monotone: increasing for \(\alpha p>1\), forcing \(x^*=B\) and \(k^*=1\)
(specialize); decreasing for \(\alpha p<1\), forcing \(x^*\to0\) and hence
the largest admissible \(k\), namely \(k^*=n\) (diversify); and constant for
\(\alpha p=1\). This recovers \Cref{cor:threshold} from the efficiency
viewpoint and explains why constant elasticity admits no interior optimum.
\end{example}

\begin{example}[Exponential Saturation]
\label{ex:exponential}
Let \(f(x)=1-e^{-\beta x}\) with \(\beta>0\), a learning curve that rises
linearly near the origin and saturates at \(1\). Its elasticity is
\[
    \elas_f(x)
    =
    \frac{\beta x\,e^{-\beta x}}{1-e^{-\beta x}}
    =
    \frac{\beta x}{e^{\beta x}-1},
\]
which decreases strictly from \(1\) at \(x=0^+\) to \(0\) as
\(x\to\infty\). Hence for every \(p>1\) the equation \(\elas_f(x)=1/p\) has a
unique positive solution \(x_p\), and, since \(\elas_f\) is strictly
decreasing, the efficiency \(\eff\) is strictly unimodal with unique
maximizer \(x_p\).

Three notions of the optimal repertoire size must here be distinguished.
The \emph{continuous relaxation} treats the repertoire size as a real
variable \(k=B/x\) and maximizes \(\eff(B/k)\) over \(k\in[1,n]\). Since
\(\eff\) is strictly unimodal in \(x\) with peak at \(x_p\), the map
\(k\mapsto\eff(B/k)\) is strictly unimodal in \(k\) with peak at the
unconstrained optimum \(k_{\mathrm c}=B/x_p\); the constrained maximizer
over \([1,n]\) is therefore the projection of \(k_{\mathrm c}\) onto
\([1,n]\), equal to \(k_{\mathrm c}\) itself when \(B/n\leq x_p\leq B\) and
to the nearer endpoint otherwise. The \emph{optimal balanced repertoire
size} of \Cref{def:repertoire} maximizes \(\eff(B/k)\) over the integers
\(k\in\{1,\ldots,n\}\); by the same unimodality it is \(\lfloor
k_{\mathrm c}\rfloor\) or \(\lceil k_{\mathrm c}\rceil\) when
\(k_{\mathrm c}\in[1,n]\), and the nearer endpoint \(1\) or \(n\)
otherwise. The \emph{unrestricted problem}
\(\max_{\vx\in\simplex_B}\sum_i g_p(x_i)\) coincides with the balanced
optimum when \(B/x_p\) is an integer in \(\{1,\ldots,n\}\): then
\Cref{cor:elasticity-structural} applies and every global optimum is a
balanced allocation of order \(B/x_p\). When \(B/x_p\) is not an
integer, however, the bound \(B\,\eff(x_p)\) of \Cref{thm:efficiency-bound}
cannot be attained, since attainment would require every practiced skill to
receive exactly \(x_p\) and hence \(B/x_p\) to be an integer; the
unrestricted optimum is then strictly smaller than the relaxation predicts,
and the present results locate the best \emph{balanced} schedule without
establishing that it solves the unrestricted problem.

This is nonetheless the simplest model in which the optimal balanced
repertoire is genuinely intermediate, answering ``one kick or ten
thousand?'' with neither extreme: the saturating curve has strictly
decreasing elasticity, so a definite interior repertoire is optimal among
balanced schedules, and optimal over all schedules when \(B/x_p\) is an
integer.
\end{example}

The contrast between \Cref{ex:power-law-elasticity,ex:exponential} is the
heart of the matter. A learning curve of constant elasticity gives no
interior balance and forces one of the two extremes of the aphorism; a
genuinely saturating curve gives strictly decreasing elasticity, an
interior solution of \eqref{eq:elasticity-condition}, and a well-defined
finite number of kicks worth mastering.

\section{Skills of Unequal Difficulty}
\label{sec:heterogeneous}

The preceding sections assumed identical skills. We now permit different
learning rates while retaining a common curve shape, writing
\begin{equation}
    f_i(x)=f\!\Bigl(\frac{x}{\tau_i}\Bigr),
    \label{eq:hetero-curve}
\end{equation}
where \(\tau_i>0\) is a characteristic practice scale for skill \(i\); a
small \(\tau_i\) marks a skill acquired quickly.

Under additive effectiveness with weights \(w_i>0\), the problem is to
maximize \(\sum_i w_i\,f(x_i/\tau_i)\) over \(\simplex_B\), and
\Cref{thm:marginal-equalization} gives, at an interior optimum,
\begin{equation}
    \frac{w_i}{\tau_i}\,
    f'\!\Bigl(\frac{x_i^*}{\tau_i}\Bigr)
    =\lambda
    \qquad(i\in\supp\vx^*).
    \label{eq:hetero-foc}
\end{equation}
Practice is therefore \emph{not} distributed in proportion to learning
difficulty. It is distributed so that the weighted marginal return per unit
of practice is equalized across the practiced skills, which may favor easy
or hard skills depending on how the weight \(w_i\) and the scale \(\tau_i\)
interact in \eqref{eq:hetero-foc}.

The hard-mastery version has a different, and combinatorially familiar,
structure. Suppose skill \(i\) is worth \(w_i\) once at least \(\tau_i\)
units of practice are invested and worth nothing otherwise. Write
\(y_i=\mathbf{1}\{x_i\geq\tau_i\}\) for the mastery indicator. Under the
standing constraint \(\sum_i x_i=B\), an optimal schedule may still be
forced to assign practice that produces no value, either below a threshold
or as unavoidable excess when \(B>\sum_i\tau_i\); but such subthreshold and
excess allocations do not affect the total value \(\sum_i w_i y_i\). The
optimal value is therefore obtained by choosing which skills to master,
subject to their thresholds summing to at most \(B\), and the surplus budget
is distributed harmlessly. This is the \(0\)--\(1\) knapsack problem
\begin{equation}
    \max\ \sum_{i=1}^n w_i y_i
    \quad\text{subject to}\quad
    \sum_{i=1}^n \tau_i y_i\leq B,
    \quad y_i\in\{0,1\}.
    \label{eq:knapsack}
\end{equation}

\begin{proposition}[Knapsack Reduction]
\label{prop:knapsack}
The optimal value of the heterogeneous hard-mastery model equals that of the
\(0\)--\(1\) knapsack problem \eqref{eq:knapsack} with item values \(w_i\)
and weights \(\tau_i\).
\end{proposition}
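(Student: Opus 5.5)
The plan is to prove the equality of optimal values by two inequalities, passing through the mastery-indicator vector \(y(\vx)\) with \(y_i=\mathbf{1}\{x_i\geq\tau_i\}\). The heterogeneous hard-mastery objective is \(W(\vx)=\sum_i w_i\,\mathbf{1}\{x_i\geq\tau_i\}\) over \(\vx\in\simplex_B\). Both problems have finite feasible value sets: \(W\) takes at most \(2^n\) values, and the knapsack problem \eqref{eq:knapsack} has finitely many feasible \(y\), with \(y=0\) always feasible. Hence both maxima exist, and it suffices to compare the two sets of attainable values.

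\emph{Upper bound.} Take any \(\vx\in\simplex_B\) and set \(y=y(\vx)\). I would check that \(y\) is knapsack-feasible. This follows from
\[
  \sum_i\tau_i y_i\leq\sum_{i:\,y_i=1}x_i\leq\sum_i x_i=B,
\]
where the first inequality uses \(x_i\geq\tau_i\) whenever \(y_i=1\), and the second uses \(x_i\geq0\). Since \(W(\vx)=\sum_i w_iy_i\), every schedule's value is bounded by the knapsack optimum.

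\emph{Lower bound.} Given a feasible \(y\), construct a schedule in \(\simplex_B\) with value at least \(\sum_i w_iy_i\). Set \(x_i=\tau_i\) for \(y_i=1\) and \(x_i=0\) otherwise. The surplus \(B-\sum_i\tau_iy_i\geq0\) must then be placed somewhere without lowering the value.
\begin{itemize}
  \item If some \(y_i=1\), add the whole surplus to that coordinate; it stays at least \(\tau_i\), so mastery is preserved.
  \item If \(y=0\), any point of \(\simplex_B\) (for instance \((B,0,\ldots,0)\)) has value at least \(0\), since \(w_i>0\).
\end{itemize}
Adding practice never removes mastery, because each indicator is monotone in \(x_i\). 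So the resulting schedule has \(W(\vx)\geq\sum_i w_iy_i\), and may even be strictly larger if the surplus happens to push an extra skill past its threshold. Either way, the inequality in this direction is what is needed.

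Combining the two bounds gives equality of the optimal values. The only delicate point is the surplus distribution in the lower bound, which is required by the equality constraint \(\sum_i x_i=B\). It is handled by the monotonicity of the indicators, together with the separate treatment of the case \(y=0\) and the observation that \(n\geq1\) guarantees some coordinate exists to absorb the budget.
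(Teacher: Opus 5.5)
Your proof is correct and takes essentially the same approach as the paper's argument in the paragraph preceding the proposition: the value depends only on the mastery indicators, a mastery set is realizable exactly when its thresholds sum to at most \(B\), and the surplus forced by \(\sum_i x_i=B\) can be placed where it does no harm. You make both inequalities explicit where the paper leaves them informal; your remark that the value ``may even be strictly larger'' cannot occur in your construction when some \(y_i=1\), since every unmastered coordinate stays at \(0<\tau_j\), but this does not affect the argument.
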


In general \eqref{eq:knapsack} is \NP-hard \citep{martello1990knapsack},
so heterogeneity in value can make the ``how many kicks?'' question
genuinely difficult. When the skills are equally valuable, however, the
answer is elementary and greedy.

\begin{proposition}[Greedy Mastery for Equal Values]
\label{prop:greedy-threshold}
Suppose every mastered skill is worth \(1\), and order the thresholds as
\(\tau_{(1)}\leq\cdots\leq\tau_{(n)}\). Then the maximum number of mastered
skills is the largest integer \(k\) with
\(\sum_{j=1}^k \tau_{(j)}\leq B\), achieved by mastering the \(k\) cheapest
skills.
\end{proposition}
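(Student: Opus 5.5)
The plan is to reduce to the knapsack problem of \Cref{prop:knapsack} with all values equal to \(1\), so that we need only maximize the cardinality \(|T|\) of a set \(T\subseteq\{1,\ldots,n\}\) of mastered skills subject to \(\sum_{i\in T}\tau_i\leq B\). The argument has two halves: an exchange (sorting) argument for the upper bound, and an explicit schedule for achievability.

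For the upper bound, take any feasible \(T\) with \(|T|=m\). List its thresholds in increasing order, \(\tau_{i_1}\leq\cdots\leq\tau_{i_m}\). For each \(j\), at least \(j\) of all the skills have threshold at most \(\tau_{i_j}\), namely \(i_1,\ldots,i_j\). Hence \(\tau_{(j)}\leq\tau_{i_j}\), which is the standard fact that the \(j\)-th order statistic of the whole list is at most the \(j\)-th smallest element of any sublist. Summing over \(j\) gives
\[
  \sum_{j=1}^m\tau_{(j)}\leq\sum_{i\in T}\tau_i\leq B.
\]
Since the \(\tau_{(j)}\) are positive, the partial sums \(\sum_{j=1}^m\tau_{(j)}\) increase strictly in \(m\). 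The set of admissible \(m\) is therefore an initial segment, and \(m\leq k\), where \(k\) is the largest integer with \(\sum_{j\leq k}\tau_{(j)}\leq B\). If even \(\tau_{(1)}>B\), then \(k=0\) and nothing can be mastered, consistent with the same bound.

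For achievability, choose \(T\) to be the index set of the \(k\) cheapest skills. Set \(x_i=\tau_i\) for \(i\in T\). Then dump the leftover \(B-\sum_{j\leq k}\tau_{(j)}\geq0\) on any single coordinate; adding practice never destroys mastery, since mastery only requires \(x_i\geq\tau_i\). The result is in \(\simplex_B\) and has exactly \(k\) mastered skills, matching the bound.

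The only delicate point is the order-statistic comparison \(\tau_{(j)}\leq\tau_{i_j}\) when thresholds tie. I will handle ties by fixing a tie-broken sorting permutation, after which the counting argument goes through verbatim. The rest is routine bookkeeping with the feasibility constraint \(\sum_i x_i=B\), already settled in the discussion preceding \Cref{prop:knapsack}.
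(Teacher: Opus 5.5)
Your proposal is correct and follows essentially the same route as the paper. The paper also bounds the cost of mastering any \(k\) skills from below by the sum of the \(k\) smallest thresholds, and notes that the \(k\) cheapest skills can be mastered when that sum is at most \(B\); your comparison \(\tau_{(j)}\leq\tau_{i_j}\), the monotonicity of the partial sums, and the explicit schedule placing the leftover budget on one coordinate just spell out steps the paper leaves implicit.
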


\begin{proof}
Mastering any \(k\) skills costs at least \(\sum_{j=1}^{k}\tau_{(j)}\), the
sum of the \(k\) smallest thresholds; if this exceeds \(B\), no \(k\) skills
can be mastered, and if it does not, the \(k\) cheapest can be. The maximum
feasible \(k\) is therefore as stated.
\end{proof}

\section{Discussion}
\label{sec:discussion}

The Bruce Lee aphorism contrasts two extreme practice schedules: the fully
specialized allocation \((B,0,\ldots,0)\) and, in effect, the fully
diversified one \((B/n,\ldots,B/n)\).  Our analysis shows that
neither extreme is universally correct and, more usefully, identifies
what the correct answer depends on.

Three conclusions organize the analysis. First, saturation caps the value
of specialization: once further repetition yields no further skill, it is
wasted whenever a fresh skill remains, as \Cref{cor:hard-value} makes
quantitative. Second, diminishing returns do not by themselves settle the
question, because the same concave learning curve favors breadth under an
additive objective (\Cref{cor:equal-practice}) and depth under a
strongest-skill objective (\Cref{prop:max-specialization}); the outcome
turns on the aggregation rule as much as on the learning curve. Third, and
most sharply, the entire trade-off is captured by a single scalar
efficiency function \(\eff(x)=f(x)^p/x\). Under
\Cref{thm:structural} its optimizer determines a balanced schedule, and by
the elasticity criterion \eqref{eq:elasticity-condition} the optimal number
of kicks is fixed by the equation \(\elas_f(x^*)=1/p\): the learning
elasticity on one side, the reciprocal of the aggregation exponent on the
other.

Seen this way, Bruce Lee's saying about ten thousand kicks is not a
universal theorem about skill learning through practice, but rests on
an implicit pair of assumptions---one about the shape of learning and
one about the structure of performance. The man who practices one kick
ten thousand times is vindicated when his learning curve keeps
elasticity above \(1/p\) all the way to \(x=B\), for the efficiency
then increases throughout and complete specialization is globally
optimal; the man who practices ten thousand kicks once is favored when
the elasticity stays below \(1/p\) throughout, for the efficiency then
decreases and, among balanced schedules, the broadest is best.  The two
halves are not quite symmetric: increasing efficiency makes
specialization optimal over all schedules, whereas decreasing
efficiency establishes breadth only among balanced schedules, in
keeping with the balanced-versus-unrestricted distinction of
\Cref{def:repertoire,ex:exponential}; global optimality of the fully
diversified allocation would require a further condition, such as
concavity of \(g_p\). These are in any case sufficient conditions
rather than a complete characterization: between them lies the
interesting case, and there the number of kicks worth practicing is
neither one nor ten thousand, but an intermediate value governed by
\(B/x^*\).

Several natural extensions lie beyond the present framework and would
require genuine modification of it. Skill interactions that violate
separability can dissolve the scalar efficiency reduction altogether, since
the objective is then no longer a sum of one-variable terms; and budgets
that grow with acquired competence alter the feasible set in ways the
current analysis does not address. We regard these as directions for further
work rather than as immediate consequences of the reduction.

The equalization and concentration extremes established above
(\Cref{cor:equal-practice,prop:max-specialization}) may also be phrased in
the language of majorization \citep{marshall2011majorization}, which orders
symmetric separable objectives by how evenly the budget is spread; we do not
develop that reformulation here.

We close by returning to the observation with which the paper began.
Bruce Lee's preference for depth is not overturned by the analysis but
located by it: the elasticity criterion \eqref{eq:elasticity-condition}
marks exactly where practicing one kick ten thousand times gives way,
through a regime of definite intermediate breadth, to practicing many
kicks fewer times each. The saying is thereby not merely illustrated but
sharpened, its force and its limits fixed by the assumptions it silently
makes about how skill is learned and how performance is judged.



\end{document}